\theoremstyle{plain}
\newtheorem{thm}{Theorem}
\newtheorem{cor}{Corollary}
\newtheorem{prop}{Proposition}
\theoremstyle{definition}
\newtheorem{dfn}{Definition}
\theoremstyle{remark}
\newtheorem*{rem}{Remarks}
\newcommand{\C}{\mathbb{C}}
\newcommand{\N}{\mathbb{N}}
\title{Moment functions on groups}
\author{\.Zywilla Fechner, Eszter Gselmann and László Székelyhidi}
\address[\.Z.~Fechner]{Institute of Mathematics \\Łódź University of Technology \\ 90-924 Łódź, ul. Wólczańska 215 \\ Poland}
\email{zywilla.fechner@p.lodz.pl}
\address[E.~Gselmann]{Institute of Mathematics \\University of Debrecen \\H-4002 Debrecen, P.O.Box: 400 \\ Hungary}
\email{gselmann@science.unideb.hu}
\address[L.~Székelyhidi]{Institute of Mathematics \\University of Debrecen \\H-4002 Debrecen, P.O.Box: 400 \\ Hungary}
\email{szekely@science.unideb.hu, lszekelyhidi@gmail.com}
\begin{document}

\begin{abstract}
 The main purpose of this work is to prove characterization theorems for generalized moment functions on groups. 
 According one of the main results these are exponential polynomials that can be described with the aid of 
 complete (exponential) Bell polynomials. These characterizations will be immediate consequences of our main result about the 
 characterization of generalized moment functions of higher rank. 
\end{abstract}

\keywords{moment function, moment sequence, exponential polynomial, Bell polynomials}
\subjclass{39B52, 39B72, 43A45, 43A70}

\maketitle

\section{Introduction}

Let $(G,+)$ be an Abelian group. Recall that a nonzero function $m\colon G\to \C$ is called {\it exponential}, if 
$$
m(x+y)=m(x)m(y)
$$ 
holds for all $x,y$ in $G$. 
Let $N$ be a nonnegative integer. A function $\varphi\colon G\to \C$ is termed to be a {\it moment function of order} $N$, if there exist functions $\varphi_k\colon G \to \C$ such that $\varphi_0=1$, $\varphi_N=\varphi$ and
\begin{equation}
\varphi_k(x+y)=\sum_{j=0}^k {k\choose j} \varphi_j(x)\varphi_{k-j}(y),
\label{eq:MomentFunctionsOnGroup}
\end{equation} 
for all $x$ and $y$ in $G$ ($k=0,1,\dots,N$). We generalize this concept by relaxing the assumption \hbox{$\varphi_0=1$} to $\varphi_0(0)=1$. 
In this case $\varphi_0$ is an arbitrary exponential function and we say that $\varphi_0$ {\it generates the generalized moment sequence of order} $N$ and the function $\varphi_k$ is a {\it generalized moment function of order} $k$, or, if we want to specify the exponential $\varphi_0$, then we say that $\varphi_k$ is a {\it generalized moment function of order} $k$ {\it associated with the exponential} $\varphi_0$. Problems about moment functions have been extensively studied on different type of abstract structures, in particular on hypergroups. 
They are measure algebras where convolution has specific probabilistic interpretation. 
In the hypergroup-setting the justification of the name `moment problems' is more visible than on abstract groups without additional structure  as we can interpret moment functions by means of properly defined moments of random elements. 
For a more detailed discussion see e.g.  \cite{FecSze19a} and \cite{Sze13}. 
More on moment functions can be found e.g. in \cite{FecSze19,OroSze04,OroSze05,OroSze08, Sze18, SzeVaj12a, SzeVaj12} and references therein. As already mentioned groups are special hypergroups and in this paper we are going to focus on a generalized moment problem on Abelian groups.

Equation \eqref{eq:MomentFunctionsOnGroup} is closely related to the well-known functions of binomial type. We are particular interested in 
\eqref{eq:MomentFunctionsOnGroup} defined on abstract structures. 
A detailed discussion about binomial type equations in abstract setting can be found in \cite{Acz77}. 
In addition, this paper of J.~Aczél has provided motivation for the present research. 
Namely, it is shown there that if $(G,+)$ is a grupoid and $R$ is a commutative ring, then functions $\varphi_n\colon G\to R$ satisfying \eqref{eq:MomentFunctionsOnGroup} for each $n$ in $\N$ are of the form
\begin{equation}
\varphi_n(t)=n!\sum_{j_1+2j_2+\dots+nj_n=n}\prod_{k=1}^n \frac{1}{j_k!}\left(\frac{a_k(t)}{k!}  \right)^{j_k},
\label{eq:AczelPolySol}
\end{equation}
for all $t$ in $G$ and $k$ in $\N$ and arbitrary homomorphisms $a_k$ from $(G,+)$ into $(R,+)$.

The present paper is organized as follows: first we give the definition and a multi-variable characterization of generalized moment functions. 
Next we introduce the notion of generalized moment functions of rank $r$, a generalization of the soultions of \eqref{eq:MomentFunctionsOnGroup}, which represent generalized moment functions of rank $1$.
Our main result is the description of generalized moment functions of higher rank by means of Bell polynomials. As a corollary we get 
a new characterization of generalized moment functions on Abelian groups, as well. 

We recall the definition of the multinomial coefficient which will be used below: let $n$ be a nonnegative integer and $l$a positive integer. Then we write
$$ 
\binom{n}{k_{1}, k_2,\ldots, k_{l}}=\frac{n!}{k_{1}! k_2!\cdots k_{l}!} 
$$
for all $k_{1}, k_2,\ldots, k_{l}$ in $\N$ satisfying $ k_{1}+k_2+ \ldots+ k_{l}=n$. Observe that for $l=2$ we obtain the binomial coefficient.

For the sake of completeness we clarify that in this paper the set of complex numbers is denoted by $\C$, the set of nonzero complex numbers by $\C^{\times}$ and the set of nonnegative integers by $\N$. 

\section{Generalized moment functions on groups}

We begin with a multi-variable characterization of moment functions. 

\begin{prop}
 Let $G$ be an Abelian group, $l$, $N$ positive integers with $l\geq 2$, and let
 \hbox{$\varphi_{0}, \varphi_1,\ldots, \varphi_{N}\colon G\to \C$} be functions for which 
 \begin{equation}\label{Eq2}
  \varphi_{n}(x_{1}+x_2+\cdots+x_{l})= \sum_{\substack{k_{1}, k_2,\ldots, k_{l}\geq 0 \\ k_{1}+k_2+\cdots +k_{l}=n}}\binom{n}{k_{1}, k_2,\ldots, k_{l}} \prod_{t=1}^{l}\varphi_{k_{t}}(x_{t})
 \end{equation}
holds for each $x_{1}, x_2,\ldots, x_{l}$ in $G$ and $n=0, 1,\ldots, N$. If $\varphi_{0}(0)=0$, then all the functions $\varphi_{0}, \varphi_1,\ldots, \varphi_{N}$ are identically zero. 
If $\varphi_{0}(0)=1$, then the functions $\varphi_{0}, \varphi_1,\ldots, \varphi_{N}\colon G\to \C$ form a generalized moment sequence of order $N$. 
\end{prop}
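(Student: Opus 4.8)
The plan is to extract the needed information in two stages: first pin down the ``diagonal'' values $\varphi_k(0)$, and then reduce the $l$-variable identity \eqref{Eq2} to the two-variable identity \eqref{eq:MomentFunctionsOnGroup} by a single substitution.

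First I would set $x_1=x_2=\cdots=x_l=0$ in \eqref{Eq2}. Writing $c_k:=\varphi_k(0)$, this yields the recursion
$$c_n=\sum_{\substack{k_1,\ldots,k_l\geq 0\\ k_1+\cdots+k_l=n}}\binom{n}{k_1,\ldots,k_l}\prod_{t=1}^l c_{k_t},\qquad n=0,1,\ldots,N,$$
which, in terms of the truncated exponential generating function $C(z)=\sum_{k=0}^N \frac{c_k}{k!}z^k$, reads $C\equiv C^l$ modulo $z^{N+1}$. I would analyze this by induction on $n$. For $n=0$ it gives $c_0=c_0^l$, so either $c_0=0$ or $c_0^{l-1}=1$, consistent with the two hypotheses $\varphi_0(0)\in\{0,1\}$. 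In the case $\varphi_0(0)=0$ one checks that every term on the right carries a factor $c_{k_t}=0$ (at least one of the $l\ge 2$ indices is forced to be $0$, or else some index lies strictly between $0$ and $n$), whence $c_n=0$ for all $n$. In the case $\varphi_0(0)=1$, the only surviving terms are the $l$ terms in which a single index equals $n$ while the rest vanish, each contributing $c_n$; this gives $c_n=l\,c_n$, i.e.\ $(l-1)c_n=0$, so $c_n=0$ for every $n\geq 1$. Thus $c_k=\delta_{k,0}$.

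With the diagonal values in hand, the second stage is immediate. For $\varphi_0(0)=0$, substitute $x_2=\cdots=x_l=0$ into \eqref{Eq2}: since every $c_k=0$ and $l\ge 2$, each summand on the right vanishes, forcing $\varphi_n\equiv 0$ for all $n$. For $\varphi_0(0)=1$, substitute $x_1=x$, $x_2=y$ and $x_3=\cdots=x_l=0$ (this step is vacuous when $l=2$). Because $c_{k_t}=\varphi_{k_t}(0)$ vanishes unless $k_t=0$, the constraint kills every term except those with $k_3=\cdots=k_l=0$; the multinomial coefficient then collapses to $\binom{n}{k_1}$ and we recover exactly
$$\varphi_n(x+y)=\sum_{j=0}^n\binom{n}{j}\varphi_j(x)\varphi_{n-j}(y),$$
which is \eqref{eq:MomentFunctionsOnGroup}. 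In particular the case $n=0$ shows $\varphi_0(x+y)=\varphi_0(x)\varphi_0(y)$, and since $\varphi_0(0)=1$ forces $\varphi_0\not\equiv 0$, the function $\varphi_0$ is a genuine exponential; hence $\varphi_0,\ldots,\varphi_N$ form a generalized moment sequence of order $N$.

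I expect the only real obstacle to be the first stage---correctly solving the recursion for the values $\varphi_k(0)$---since everything afterward is a bookkeeping substitution. The delicate point is verifying that in the $\varphi_0(0)=1$ case no term other than the $l$ ``pure'' terms survives and that the resulting coefficient is exactly $l$; the factor $l-1\neq 0$, guaranteed by $l\ge 2$, is precisely what forces the higher diagonal values to vanish and what makes the reduction to two variables clean.
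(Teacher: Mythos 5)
Your proposal is correct and follows essentially the same strategy as the paper's proof: establish by induction on $n$ that $\varphi_{n}(0)=0$ for all $n\geq 1$, and then substitute $x_{1}=x$, $x_{2}=y$, $x_{i}=0$ for $i\geq 3$ to collapse \eqref{Eq2} into \eqref{eq:MomentFunctionsOnGroup}. The only (minor) difference is that you run the induction purely on the scalar values $\varphi_{k}(0)$ via the identity $(l-1)\varphi_{n}(0)=0$, whereas the paper keeps one group variable free and also tracks the a priori possibility that $\varphi_{0}(0)$ is an $(l-1)$\textsuperscript{th} root of unity before specializing to the hypotheses $\varphi_{0}(0)\in\{0,1\}$.
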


\begin{proof}
 Assume that conditions of the theorem are satisfied.
 If $l=2$, then equation \eqref{Eq2} reduces to \eqref{eq:MomentFunctionsOnGroup}. 
 
 Suppose that $l>2$ and let $x, y$ be in $G$. 
 With the substitution  $x_{1}= x, x_{2}= y $ and $x_{i}=0$  for $i=3,4,\dots,n$
we get from equation \eqref{Eq2} that 
\[
 \varphi_{n}(x+y)= \sum_{\substack{k_{1},k_2, \ldots, k_{l}\geq 0\\ k_{1}+k_2+\cdots+k_{l}=n}}
 \binom{n}{k_{1}, k_2,\ldots, k_{l}}\varphi_{k_{1}}(x)\varphi_{k_{2}}(y) \cdot \prod_{t=3}^{l}\varphi_{k_{t}}(0)
\]
for each $n=0, 1,\ldots, N$. Thus the only thing we should prove is that $\varphi_{n}(0)=0$ if $n\geq 1$. 
Indeed, in this case the right hand side of the above identity is nothing but $\displaystyle\sum_{k=0}^{n}\binom{n}{k} \varphi_{k}(x)\varphi_{n-k}(y)$. 

With $n=0$ and $x_{i}=0$ for $i=1, 2,\ldots, l$ equation \eqref{Eq2}  leads to 
\[
 \varphi_{0}(0)=\varphi_{0}(0)^{l}, 
\]
that is, 
\[
 \varphi_{0}(0) \cdot  \left(\varphi_{0}(0)^{l-1}-1\right)=0, 
\]
from which we derive that either $\varphi_{0}(0)=0$ or $\varphi_{0}(0)$ is an $(l-1)$\textsuperscript{th} root of unity. 

Furthermore, equation \eqref{Eq2} for $n=0$ and $x_{1}, x_2,\ldots, x_{l}$ in $G$ yields that 
\[
 \varphi_{0}(x_{1}+x_2+\cdots+x_{l})=\prod_{i=1}^{l}\varphi_{0}(x_{i}) 
 , 
\]
which means that there exists an exponential $m\colon G\to \C$ such that 
\[
 \varphi_{0}(x)= \varphi_{0}(0)\cdot m(x) 
\]
holds for each $x$ in $G$.

Similarly, equation \eqref{Eq2} for $n=1$ with $x_{1}=x$ in $G$ and $x_{i}=0$ for $i=2,3,\dots,n$ yields that 
\[
 \varphi_{1}(x)= \varphi_{1}(x)\varphi_{0}(0)^{l-1} +(l-1)\varphi_{0}(x)\varphi_{1}(0)\varphi_{0}(0)^{l-2}.  
\]
From this, it follows that 
\begin{enumerate}[(A)]
 \item either $\varphi_{0}(0)=0$, implying that both $\varphi_{0}$ and $\varphi_{1}$ are identically zero;
 \item or $\varphi_{0}(0)$ is an $(l-1)$\textsuperscript{th} root of unity and 
 \[
  (l-1)\varphi_{0}(x)\varphi_{1}(0)\varphi_{0}(0)^{l-2} =0
 \]
which happens only when $\varphi_{1}(0)=0$, otherwise $\varphi_{0}\equiv 0$ would follow contrary to the fact that 
$\varphi_{0}(0)$ is an $(l-1)$\textsuperscript{th} root of unity. 
\end{enumerate}

Assume now that there exists $k$ in $\left\{1, \ldots, N-1\right\}$ such that $\varphi_{0}(0)\neq 0$ and 
\[
 \varphi_{1}(0)=\varphi_{2}(0)= \ldots = \varphi_{k}(0)=0. 
\]
In this case equation \eqref{Eq2}, with $k+1$ instead of $n$ and with the substitution 
$x_{1}=x$ in $G$ and   $x_{i}=0$ for $i=2,3, \ldots, l$,
yields that 
\begin{multline*}
 \varphi_{k+1}(x)
 = \sum_{\substack{t_{1}, t_2,\ldots, t_{l}\geq 0\\ t_{1}+t_2+\cdots+t_{l}=k+1}} 
 \binom{k+1}{t_{1}, t_2,\ldots, t_{l}} \varphi_{t_{1}}(x)\varphi_{t_{2}}(0) \cdots \varphi_{t_{l}}(0)
 \\
 = \varphi_{k+1}(x)\varphi_{0}^{l-1}(0)+\varphi_{k}(x)\varphi_{1}(0)\varphi_{0}(0)^{l-2}+ \cdots+ \varphi_{0}(x)\varphi_{0}^{l-2}(0)\cdots\varphi_{k+1}(0). 
\end{multline*}

Again, we have the following two alternatives: 
\begin{enumerate}[(A)]
 \item either $\varphi_{0}(0)=0$, implying that $\varphi_{k+1}$ is identically zero; 
 \item or $\varphi_{0}$ is an $(l-1)$\textsuperscript{th} root of unity and 
 \[
  \varphi_{k}(x)\varphi_{1}(0)\varphi_{0}(0)^{l-2}+ \cdots+ \varphi_{0}(x)\varphi_{0}^{l-2}(0)\cdots\varphi_{k+1}(0)=0 
  \qquad 
  \left(x\in G\right). 
 \]
\end{enumerate}
Due to the induction hypothesis $\varphi_{1}(0)= \varphi_2(0)=\ldots = \varphi_{k}(0)=0$, thus 
\[
 \varphi_{0}(x)\varphi_{0}^{l-2}(0)\cdots\varphi_{k+1}(0)=0. 
\]
Since $\varphi_{0}(0)\neq 0$, $\varphi_{0}$ is not identically zero, which shows that the only possibility is that $\varphi_{k+1}(0)=0$. 

Finally we conclude that $\varphi_{1}(0)=\varphi_{2}(0)= \ldots= \varphi_{N}(0)=0$. However,  this implies for $x,y$ in $G$:
\[
 \sum_{\substack{k_{1}, k_2,\ldots, k_{l}\geq 0\\ k_{1}+k_2+\cdots+k_{l}=n}}
 \binom{n}{k_{1}, k_2,\ldots, k_{m}}\varphi_{k_{l}}(x)\varphi_{k_{2}}(x) \cdot \prod_{t=3}^{l}\varphi_{k_{t}}(0)
 =
 \sum_{k=0}^{n}\binom{n}{k} \varphi_{k}(x)\varphi_{n-k}(y). 
\]
Therefore, if $\varphi_{0}(0)=1$, then the functions $\varphi_{0}, \varphi_{1}, \ldots, \varphi_{N}$ form a generalized moment sequence of order $N$. 
\end{proof} 

\begin{prop}
  Let $G$ be an Abelian group, and let $l$, $N$ be positive integers with $l\geq 2$. If the functions
 \hbox{$\varphi_{0}, \varphi_1,\ldots, \varphi_{N}\colon G\to \C$}  form a moment sequence of order $N$, then the system of equations 
 \[
  \varphi_{n}(x_{1}+x_2+\cdots+x_{l})= \sum_{\substack{k_{1}, k_2,\ldots, k_{l}\geq 0 \\ k_{1}+k_2+\cdots +k_{l}=n}}\binom{n}{k_{1}, k_2,\ldots, k_{l}} \prod_{t=1}^{l}\varphi_{k_{t}}(x_{t})
 \]
is satisfied for each $x_{1}, x_2,\ldots, x_{l}$ in $G$ and $n=0, 1,\ldots, N$. 
\end{prop}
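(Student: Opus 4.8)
The plan is to argue by induction on the number $l$ of summands. The base case $l = 2$ is precisely the defining relation \eqref{eq:MomentFunctionsOnGroup} of a moment sequence of order $N$, so nothing has to be proved there. For the inductive step, suppose the $l$-variable identity \eqref{Eq2} holds for some $l \geq 2$ and all $n = 0, 1, \ldots, N$, and let $x_1, \ldots, x_{l+1}$ be in $G$.

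First I would split off the last variable by writing the argument as $(x_1 + \cdots + x_l) + x_{l+1}$ and applying the two-variable equation \eqref{eq:MomentFunctionsOnGroup}, which yields
\[
\varphi_n(x_1 + \cdots + x_{l+1}) = \sum_{k=0}^{n} \binom{n}{k}\, \varphi_k(x_1 + \cdots + x_l)\, \varphi_{n-k}(x_{l+1}).
\]
Next I would substitute the induction hypothesis for each factor $\varphi_k(x_1 + \cdots + x_l)$, obtaining a double sum over $k$ and over all tuples $(k_1, \ldots, k_l)$ of nonnegative integers with $k_1 + \cdots + k_l = k$.

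The only bookkeeping step is the reindexing. Setting $k_{l+1} := n - k$, the outer index $k$ equals $k_1 + \cdots + k_l$ and the combined constraint becomes $k_1 + \cdots + k_{l+1} = n$. To merge the two coefficients into a single multinomial coefficient I would invoke the elementary identity
\[
\binom{n}{k}\binom{k}{k_1, k_2, \ldots, k_l} = \binom{n}{k_1, k_2, \ldots, k_l, k_{l+1}},
\]
which is immediate from the factorial expressions once $k = k_1 + \cdots + k_l$ and $k_{l+1} = n - k$ are inserted. After this substitution the product $\prod_{t=1}^{l} \varphi_{k_t}(x_t)$ acquires the extra factor $\varphi_{k_{l+1}}(x_{l+1})$, and the resulting single sum over $(k_1, \ldots, k_{l+1})$ is exactly the $(l+1)$-variable form of \eqref{Eq2}, which closes the induction.

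I do not expect a genuine obstacle here, since the statement is the straightforward converse of Proposition 1; its whole content is the associativity of the binomial-type recursion together with the multinomial identity above. The only point demanding a little care is keeping the summation indices consistent when the two nested sums are collapsed into one sum over the $(l+1)$-tuples $(k_1, \ldots, k_{l+1})$ with $k_1 + \cdots + k_{l+1} = n$.
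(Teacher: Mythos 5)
Your proof is correct and takes essentially the same approach as the paper's: induction on $l$, with the two-variable moment relation and the identity $\binom{n}{k}\binom{k}{k_1,\ldots,k_l}=\binom{n}{k_1,\ldots,k_l,k_{l+1}}$ (where $k_{l+1}=n-k$) used to collapse the nested sums into one multinomial sum. The only immaterial difference is the order of grouping: you split off $x_{l+1}$ via the two-variable equation and then expand $\varphi_k(x_1+\cdots+x_l)$ by the induction hypothesis, while the paper groups $x_l+x_{l+1}$ as one block, applies the induction hypothesis to the resulting $l$ blocks, and then expands $\varphi_{k_l+k_{l+1}}(x_l+x_{l+1})$ by the two-variable case.
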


\begin{proof}
We prove the statement by induction on $l$. 
Assume that functions $\varphi_{0}, \varphi_{1}, \ldots, \varphi_{N}$ constitute a moment sequence of order $N$, i.e.
\[
 \varphi_{n}(x+y)= \sum_{k=0}^{n}\binom{n}{k} \varphi_{k}(x)\varphi_{n-k}(y)
\]
is satisfied for each $x, y\in G$. 
First observe that for $n=0$ the statement holds trivially, because both equations are the same, namely 
\[
\varphi_0(x+y)=\varphi_0(x)\varphi_0(y)
\qquad 
\left(x, y\in G\right).
\]
Thus it is enough to consider the case $n\geq 1$. 
Secondly, note that from \eqref{eq:MomentFunctionsOnGroup} we have $\varphi_0(0)=0$ or $\varphi_0(0)=1$ and for all $n\geq 1$ we have $\varphi_n(0)=0$. 

Let now $n$ in $\left\{1,\dots,N   \right\}$ be arbitrarily fixed and take $x_1, x_2$ in $G$. 
Applying \eqref{Eq2} for $m=2$ and $n$ we have
\[
\varphi_n(x_1+x_2)=\sum_{k=0}^n {n\choose k} \varphi_{k}(x_1)\varphi_{n-k}(x_2). 
\]
Observe that the right hand side of this equation can be written as 
\[
 \sum_{k=0}^n {n\choose k} \varphi_{k}(x_1)\varphi_{n-k}(x_2)= \sum_{k_1,k_2\geq 0, k_1+k_2=n} {n \choose k_1,k_2}\varphi_{k_1}(x_1)\varphi_{k_2}(x_2)
 \qquad 
 \left(x_{1}, x_{2}\in G\right), 
\]
whixch yields that the statement holds for $l=2$. 
Assume that there exists an integer $l\geq 2$  for which the statement holds. 
Then 
\[
 \varphi_{n}(x_1+\dots+(x_l+x_{l+1}))
\\
=\sum_{\substack{k_{1}, \ldots, k_{l-1},k_l+k_{l+1}\geq 0 \\ k_{1}+\cdots +k_{l-1}+k_l+k_{l+1}=n}}\binom{n}{k_{1}, \ldots, k_{l-1}, k_l+k_{l+1}} \prod_{t=1}^{l-1}\varphi_{k_{t}}(x_{t})\varphi_{k_l+k_{l+1}}(x_l+x_{l+1})
\]
for any $x_{1}, \ldots, x_{l}, x_{l+1}$ in $G$. 
If we use that the statement holds for $l=2$ and the induction hypothesis, then we get that  
\begin{multline*}
\sum_{\substack{k_{1}, \ldots, k_{l-1},k_l+k_{l+1}\geq 0 \\ k_{1}+\cdots +k_{l-1}+k_l+k_{l+1}=n}}\binom{n}{k_{1}, \ldots, k_{l-1}, k_l+k_{l+1}} \prod_{t=1}^{l-1}\varphi_{k_{t}}(x_{t})\varphi_{k_l+k_{l+1}}(x_l+x_{l+1})
\\=
\sum_{\substack{k_{1}, \ldots, k_{l-1},k_l+k_{l+1}\geq 0 \\ k_{1}+\cdots +k_{l-1}+k_l+k_{l+1}=n}}\binom{n}{k_{1}, \ldots, k_{l-1}, k_l+k_{l+1}} \prod_{t=1}^{l-1}\varphi_{k_{t}}(x_{t}) {k_l+k_{l+1}\choose k_l} \varphi_l(x_l) \varphi_{l+1}(x_{l+1})
\\
=
\sum_{\substack{k_{1}, k_2,\ldots, k_{l+1}\geq 0 \\ k_{1}+\cdots +k_{l}+k_{l+1}=n}}\binom{n}{k_{1}, k_2,\ldots, k_{l+1}} \prod_{t=1}^{l+1}\varphi_{k_{t}}(x_{t})
\qquad 
\left(x_{1}, \ldots, x_{l}, x_{l+1} \in G\right). 
\end{multline*}
Thus the statement holds for $l+1$, too. 
\end{proof}

\section{Generalized moment functions of rank $r$}

In this section we extend the notion of generalized moment functions. 
At the same time, while stating and proving our results about generalized moment functions of higher rank, notions and results from spectral analysis and synthesis are required. 

Let $G$ be a discrete Abelian group. Subsequently, if it is not otherwise stated, the group $G$ is always endowed with this topology. 

Given a function $f$ on this group and an element $y$ in $G$, we define
\[
 \Delta_{f;y}=\delta_{-y}-f(y)\delta_{0},  
\]
where $\delta_{-y}$ and $\delta_{0}$ denote the point masses concentrated at $-y$ and $0$, respectively. 
For the product of modified differences we use the notation 
\[
 \Delta_{f; y_{1}, y_2,\ldots, y_{n+1}}= \prod_{i=1}^{n+1}\Delta_{f; y_{i}}, 
\]
for any positive integer $n$ and for each $y_{1}, y_2,\ldots, y_{n+1}$ in $G$. 
Here the product $\prod$ on the right hand side is meant as a convolution product of the measures $\Delta_{f; y_{i}}$ for all $i=1, 2,\ldots, n+1$. 

For each function $f\colon G\to \mathbb{C}$ the ideal in $\mathbb{C}G$ generated by all modified differences of the form $\Delta_{f;y}$ with $y$ in $G$, is denoted by $M_{f}$. 
Due to Theorem 12.5 of \cite{Sze14}, this ideal $M_{f}$ is proper if and only if 
$f$ is an exponential and in this case  we call $M_{f}= \tau(f)^{\perp}$  an {\it exponential maximal ideal}. Recall that $\tau(f)$ denotes the variety of the function $f$ and it is nothing but the smallest closed translation invariant subspace of $\mathscr{C}(G)$ that contains the function $f$. 

Let $m$ be an exponential on the group $G$ and $n$  a positive integer. The function 
$f\colon G\to \mathbb{C}$ is a \emph{generalized exponential monomial} of degree at most 
$n$ corresponding to the exponential $m$ if 
\[
 \Delta_{m; y_{1}, y_2,\ldots, y_{n+1}}\ast f=0
\]
holds for each $y_{1}, y_2,\ldots, y_{n+1}$ in $G$. Equivalently, $f$ is a generalized exponential monomial if there exists an exponential $m$ and a positive integer $n$ such that its annihilator includes a positive power of an exponential maximal ideal, that is
$M_{m}^{n+1}\subset \tau(f)^{\perp}$ holds. 

A special class of exponential monomials is formed by those corresponding to the exponential identically $1$: these are called \emph{generalized polynomials}.

The function $f\colon G\to \mathbb{C}$ is called an \emph{exponential monomial} if it is a generalized exponential monomial and $\tau(f)$ is finite dimensional. Finite sums of exponential monomials are called \emph{exponential polynomials}. Exponential monomials corresponding to the 
exponential identically $1$ are called \emph{polynomials}. 

In connection with exponential polynomials here we also recall Theorem 12.31 from \cite{Sze14}. 

\begin{thm}
 Let $G$ be an Abelian group, and let 
 $f\colon G\to \mathbb{C}$ be an exponential polynomial. Then there exist positive integers $n, k$ and for each $i=1, 2,\ldots, n$ and $j=1, 2,\ldots, k$ there exists a polynomial $P_{i}\colon \mathbb{C}^{k}\to \mathbb{C}$, an exponential $m_{i}$ and a homomorphism $a_{j}$ of $G$ into the additive group of complex numbers such that 
 \[
  f(x)=\sum_{i=1}^{n}P_{i}\left(a_{1}(x), a_2(x),\ldots, a_{k}(x)\right)m_{i}(x)
 \]
for each $x$ in $G$. Conversely, every function of this form is an exponential polynomial. 
\end{thm}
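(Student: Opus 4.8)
The plan is to prove both directions, treating the forward (representation) direction as the substantial one. Since an exponential polynomial is by definition a finite sum of exponential monomials, and since the class of functions admitting a representation $\sum_i P_i(a_1,\dots,a_k)m_i$ is visibly closed under finite sums (one simply pools the homomorphisms $a_j$ occurring in the individual summands into a common list and enlarges each $P_i$ by dummy variables), it suffices to establish the representation for a single exponential monomial. So let $f$ be an exponential monomial corresponding to an exponential $m$, meaning $M_m^{n+1}\subset\tau(f)^{\perp}$ for some positive integer $n$ while $\tau(f)$ is finite dimensional.

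First I would remove the exponential by a multiplicative twist. Every exponential $m$ is nowhere zero, since $m(x)m(-x)=m(0)=1$, so $g=f\cdot m^{-1}$ is well defined. The identity $f(x+y)=m(x)m(y)g(x+y)$ shows that the map $h\mapsto m\cdot h$ carries the translation structure of $g$ onto that of $f$; consequently $\tau(g)$ is again finite dimensional and the annihilator condition $M_m^{n+1}\subset\tau(f)^{\perp}$ becomes $M_1^{n+1}\subset\tau(g)^{\perp}$. As $M_1$ is generated by the ordinary differences $\delta_{-y}-\delta_0$, this says precisely that $g$ is a generalized polynomial of degree at most $n$: all its $(n+1)$-fold differences vanish.

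The heart of the argument is the structure of such a $g$. By polarization one writes $g=\sum_{j=0}^{n}A_j^{*}$, where $A_j^{*}(x)=A_j(x,\dots,x)$ and each $A_j\colon G^{j}\to\C$ is a symmetric $j$-additive function (the diagonalization theorem for generalized polynomials). Each partial slice of $A_j$, obtained by fixing all but one argument, is a homomorphism of $G$ into $(\C,+)$, and each such slice is produced from $g$ by finitely many iterated differences, hence lies in $\tau(g)$. The key step — and the main obstacle — is to deduce from the finite dimensionality of $\tau(g)$ that the whole collection of these slice-homomorphisms spans a finite-dimensional space $V$ of homomorphisms $G\to(\C,+)$. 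Granting this, I would fix a basis $a_1,\dots,a_k$ of $V$; then every $A_j$, being symmetric and multiadditive with each coordinate slice lying in $V$, is an ordinary polynomial expression in $a_1,\dots,a_k$, and hence $g=P(a_1,\dots,a_k)$ for a suitable $P\colon\C^{k}\to\C$. Untwisting gives $f=P(a_1,\dots,a_k)\,m$, which is the asserted form; summing over the finitely many monomials in a general exponential polynomial yields the statement with common homomorphisms $a_1,\dots,a_k$ and exponentials $m_i$.

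For the converse I would argue directly that each building block $P_i(a_1,\dots,a_k)m_i$ is an exponential monomial. A product $a_{j_1}\cdots a_{j_r}$ of homomorphisms is a generalized polynomial, since expanding $a_{j}(x+y)=a_j(x)+a_j(y)$ shows that any $(r+1)$-fold difference of it vanishes; its variety is finite dimensional, being contained in the span of the products of at most $r$ of the $a_j$ together with the constant $1$. Multiplying by the exponential $m_i$ converts the condition $M_1^{r+1}\subset\tau(\cdot)^{\perp}$ into $M_{m_i}^{r+1}\subset\tau(\cdot)^{\perp}$ by the same twist as above and preserves finite dimensionality, so each summand is an exponential monomial and the finite sum $f$ is an exponential polynomial. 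This direction is routine once the twist relating $M_1$ and $M_m$ is in hand.
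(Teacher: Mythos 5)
First, a point of reference: the paper does not prove this theorem at all --- it is recalled verbatim as Theorem 12.31 of \cite{Sze14} --- so your attempt can only be judged against the standard proof of that result, not against anything in the paper itself. Your outline does follow the standard route: reduce to a single exponential monomial, divide by the nowhere-vanishing exponential $m$ so that the modified differences $\Delta_{m;y}$ become ordinary differences, write the resulting generalized polynomial $g$ of degree at most $n$ as $g=\sum_{j=0}^{n}A_{j}^{*}$ with $A_{j}$ symmetric and $j$-additive, and exploit the finite dimensionality of $\tau(g)$. The reduction to monomials, the twist by $m$, and the converse direction are all correct. The genuine gap sits exactly at the step you yourself call the key one, and the justification you offer for it is false: you assert that every slice $x\mapsto A_{j}(x,y_{2},\dots,y_{j})$ is ``produced from $g$ by finitely many iterated differences, hence lies in $\tau(g)$.'' Differencing mixes the degrees: the $(j-1)$-fold ordinary difference of $g$ equals $j!\,A_{j}(x,y_{2},\dots,y_{j})$ plus a constant \emph{plus contributions from every $A_{i}$ with $i>j$}, and those higher-degree contributions cannot be eliminated by further differencing. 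Only the slices of the top-degree term $A_{n}$ can be reached this way. (Note also the internal tension in your write-up: if the slices really did lie in $\tau(g)$, the finite dimensionality of their span --- which you call the main obstacle --- would be trivial.)

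The claim is not merely unproved but wrong in general. Take $G=\mathbb{Z}^{2}$, $a(x_{1},x_{2})=x_{1}$, $b(x_{1},x_{2})=x_{2}$, and $g=a^{2}+b$, so that $A_{2}(x,y)=a(x)a(y)$ and $A_{1}=b$. From
\[
 g(x+y)=g(x)+2a(y)a(x)+\bigl(a(y)^{2}+b(y)\bigr)
\]
one reads off $\tau(g)=\mathrm{span}\{a^{2}+b,\,a,\,1\}$, and since $a^{2}$, $b$, $a$, $1$ are linearly independent, the slice $b$ of $A_{1}$ does \emph{not} lie in $\tau(g)$. So the space $V$ spanned by all slices need not sit inside $\tau(g)$, and your route to its finite dimensionality collapses. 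The conclusion you need is still true, but it requires a downward induction on the degree rather than a one-shot argument: since the $(n-1)$-fold differences of $g$ equal $n!\,A_{n}(\cdot,y_{2},\dots,y_{n})$ up to constants, and $1\in\tau(g)$ (apply one more difference), the slices of $A_{n}$ do lie in $\tau(g)$, hence span a finite-dimensional space of additive functions, and $A_{n}^{*}$ is an ordinary polynomial of finitely many additive functions --- this part of your argument is sound. Then one must \emph{subtract}: translates of a polynomial of additive functions are again such polynomials of the same degree, so $\tau(A_{n}^{*})$ is finite dimensional, hence $g-A_{n}^{*}$ is a generalized polynomial of degree at most $n-1$ with $\tau(g-A_{n}^{*})\subseteq\tau(g)+\tau(A_{n}^{*})$ finite dimensional, and one repeats. (In the example above, $b=g-A_{2}^{*}$ becomes visible only after this subtraction.) With this repair, together with your correct treatment of the reduction, the twist, and the converse, the proof goes through.
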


A \emph{composition} of a nonnegative integer $n$ is a sequence of nonnegative integers $\alpha= \left(\alpha_{k}\right)_{k\in \N}$ such that 
\[
 n= \sum_{k=1}^{\infty}\alpha_{k}. 
\]
For a positive integer $r$, an \emph{$r$-composition} of a nonnegative integer $n$ is a composition 
$\alpha= \left(\alpha_{k}\right)_{k\in \N}$ with 
$\alpha_{k}=0$ for $k>r$. 

Given a sequence of variables $x=(x_{k})_{k\in \N}$ and compositions $\alpha= \left(\alpha_{k}\right)_{k\in \N}$ and 
$\beta= \left(\beta_{k}\right)_{k\in \N}$ we define 
\[
 \alpha!=\prod_{k=1}^{\infty}\alpha_{k},\quad \left| \alpha\right| = \sum_{k=1}^{\infty}\alpha_{k}, \quad 
 x^{\alpha}=\prod_{k=1}^{\infty}x_{k}^{\alpha_{k}},\quad \binom{\alpha}{\beta}= \prod_{k=1}^{\infty}\binom{\alpha_{k}}{\beta_{k}}.
\]
Furthermore, 
$\beta \leq \alpha$ means that $\beta_{k}\leq \alpha_{k}$ for all $k\in \N$ and 
$\beta < \alpha$ stands for $\beta \leq \alpha$  and $\beta \neq \alpha$.

\begin{dfn}
Let $G$ be an Abelian group, $r$ a positive integer, and for each multi-index $\alpha$ in $\N^r$ 
let $f_{\alpha}\colon G\to \C$ be continuous function. We say that $(f_{\alpha})_{\alpha \in \N^{r}}$ is a \emph{generalized 
moment sequence of rank $r$}, if 
\begin{equation}\label{Eq3}
f_{\alpha}(x+y)=\sum_{\beta\leq \alpha} \binom{\alpha}{\beta} f_{\beta}(x)f_{\alpha-\beta}(y)
\end{equation}
holds whenever $x,y$ are in $G$. The function $f_0$, where $0$ is the zero element in $\N^r$, is called the {\it generating function} of the sequence.
\end{dfn}

\begin{rem}
\begin{enumerate}[(i)]
\item\label{rem_i}  For $r=1$, instead of multi-indices, we have `ordinary' indices and \eqref{Eq3} is nothing but
 \[
  f_{\alpha}(x+y)= \sum_{\beta=0}^{\alpha}\binom{\alpha}{\beta}f_{\beta}(x)f_{\alpha-\beta}(y) 
  \qquad 
  \left(x, y\in G\right)
 \]
for each nonnegative integer $\alpha$, showing that generalized moment functions of rank $1$ are moment sequences. 
\item\label{rem_ii} For $\alpha=(0, \ldots,0)$ we have
$$
f_{0, \ldots,0}(x+y)=f_{0, \ldots,0}(x)\cdot f_{0, \ldots,0}(y) 
\qquad 
\left(x, y\in G\right)
$$
hence $f_{0, \ldots,0}=m$ is an exponential, or identicaly zero. In what follows, when considering generalized moment function sequences of any rank, we always assume that the generating function is not identically zero, hence it is always an expoential.
\item\label{rem_iii} Now let $\alpha$ be in $\N^{r}$ with $|\alpha|=1$. In this case we have for each positive integer $n$ that 
$$
f_{n\cdot \alpha}(x+y)=\sum_{k=1}^n \binom{n}{k} f_{k\cdot \alpha}(x)f_{(n-k)\cdot \alpha}(y) 
\qquad 
\left(x, y\in G\right). 
$$
Hence $(f_{n\cdot \alpha})_{n\in \N}$ is a generalized moment function sequence associated with the exponential \hbox{$m=f_{0\cdot \alpha}$. }

\item\label{rem_iv} Using the above definition  for $r=2$, for example for  $|\alpha|\leq 2$ we have 
\[
\begin{array}{rcl}
f_{0,0}(x+y)&=&f_{0,0}(x)f_{0,0}(y)\\
f_{1,0}(x+y)&=&f_{1,0}(x)f_{0,0}(y)+f_{1,0}(y)f_{0,0}(x)\\
f_{0,1}(x+y)&=&f_{0,1}(x)f_{0,0}(y)+f_{0,1}(y)f_{0,0}(x)\\
f_{2,0}(x+y)&=&f_{2,0}(x)f_{0,0}(y)+2f_{1,0}(x)f_{1,0}(y)+f_{2,0}(y)f_{0,0}(x)\\
f_{0,2}(x+y)&=&f_{0,2}(x)f_{0,0}(y)+2f_{0,1}(x)f_{0,1}(y)+f_{0,2}(y)f_{0,0}(x)\\
f_{1,1}(x+y)&=&f_{1,1}(x)f_{0,0}(y)+f_{1,0}(x)f_{0,1}(y)+f_{1,0}(y)f_{0,1}(x)+f_{1,1}(y)f_{0,0}(x)
\end{array}
\qquad 
\left(x, y\in G\right). 
\]

In view of the first equation (or using remark (\ref{rem_ii})), we immediately get that the function 
$f_{0, 0}=m$ is an exponential. Furthermore, due to the second and the third equation, the functions 
$f_{0, 1}$ and $f_{0, 1}$ are $m$-sine functions. In other words,  $\left\{f_{0, 0}, f_{0, 1}\right\}$ and
$\left\{f_{0, 0}, f_{1, 0}\right\}$ form a moment sequence of order $1$. 
More generally, due to remark (\ref{rem_iii}), $(f_{0, n})_{n\in \N}$ and 
also $(f_{n, 0})_{n\in \N}$ is a moment function sequence associated with the exponential $f_{0, 0}=m$.

\item\label{rem_v} Assume that $(f_{\alpha})_{\alpha\in \N^{2}}$ is a generalized moment sequence of rank two. 
For all nonnegative integer $n$, define the function $\varphi_{n}\colon G\to \C$ by 
\[
 \varphi_{n}(x)= \sum_{k=0}^{n}\binom{n}{k}f_{k, n-k}(x) 
 \qquad 
 \left(x\in G\right). 
\]
Then $\left(\varphi_{n}\right)_{n\in \N}$ is a moment function sequence associated with the exponential 
$\varphi_{0}=f_{0, 0}=m.$

Indeed, for $n=0$, 
\[
 \varphi_{0}(x)=f_{0, 0}(x)
 \qquad 
 \left(x\in X\right), 
\]
which is an exponential, as we wrote above. 

For each positive $n$, we have 
\begin{multline*}
 \sum_{k=0}^{n}\binom{n}{k}\varphi_{k}(x)\varphi_{n-k}(y)
 =
 \sum_{k=0}^{n}\binom{n}{k} \cdot \left[\sum_{i=0}^{k}\binom{k}{i}f_{i, k-i}(x)\right]\cdot \left[\sum_{j=0}^{n-k}\binom{n-k}{j}f_{j, n-k-j}(y)\right]
 \\
 =
 \sum_{k=0}^{n}\binom{n}{k} \left[\sum_{i=0}^{k} \sum_{j=0}^{n-k}\binom{k}{i} \binom{n-k}{j} f_{i, k-i}(x)f_{j, n-k-j}(y)\right]
 \\
 =
 \sum_{k=0}^{n}\binom{n}{k} f_{k, n-k}(x+y)= \varphi_{n}(x+y) 
 \qquad 
 \left(x, y\in G\right). 
\end{multline*}
\item\label{rem_vii} The sequence of functions $(f_{\alpha})_{\alpha \in \N^{r}}$ is a generalized moment sequence of rank $r$ associated 
with the nonzero exponential $f_{0, \ldots, 0}=m$ \emph{if and only if} 
$({f_{\alpha}}/{m})_{\alpha \in \N^{r}}$ is a generalized moment sequence of rank $r$ associated with the exponential which is identically one. 
\item\label{rem_viii} Let $(f_{\alpha})_{\alpha \in \N^{r}}$ be a generalized moment sequence of rank $r$ and denote $m= f_{0, \ldots, 0}$. 
Then for all multi-index $\alpha$ in $\N^{r}$, the function $f_{\alpha}$ is a generalized exponential monomial of degree at most 
$|\alpha|$ corresponding to the exponential $m$. Thus for any multi-index $\alpha$ in $\N^{r}$, there exists an a generalized 
polynomial $p_{\alpha}\colon G\to \C$ of degree at most $|\alpha|$ such that $f_{\alpha}= p_{\alpha} \cdot m$. 

If $|\alpha|=0$, that is, if $\alpha= (0, 0,\ldots, 0)$, then $f_{0, \ldots, 0}=m$ is an exponential. Thus the above statement holds trivially. 

In case $|\alpha|=1$, then  there exists $i\in \left\{1, 2,\ldots, r\right\}$ such that 
\[
 \alpha_{i}=1 \quad \text{and} \quad \alpha_{j}=0 \quad \text{for any } j\in \left\{1, \ldots, r\right\}, j\neq i. 
\]
This, in view of remark (\ref{rem_iii}) implies that $f_{\alpha}$ is an $m$-sine function, i.e. 
\[
 f_{\alpha}(x+y)= f_{\alpha}(x)m(y)+m(x)f_{\alpha}(y) 
 \qquad 
 \left(x, y\in G\right). 
\]
This implies however that 
\[
 \Delta_{m; y_{1}, y_{2}}f_{\alpha}(x)=0 
 \qquad 
 \left(x, y_{1}, y_{2}\in G\right), 
\]
in other words $f_{\alpha}$ is a generalized exponential monomial of degree at most one corresponding to the exponential $m$. 

Assume now that there exists a multi-index $\alpha$ in $\N^{r}$ such that the statement holds for any multi-index $\beta$ for which 
$\beta < \alpha$. Since we have 
\[
  f_{\alpha}(x+y)= \sum_{\beta\leq \alpha }\binom{\alpha}{\beta}f_{\beta}(x)f_{\alpha-\beta}(y) 
\]
for each $x, y\in G$, 
\[
\Delta_{m; y}f_{\alpha}(x) =  \sum_{\beta <\alpha }\binom{\alpha}{\beta}f_{\beta}(x)f_{\alpha-\beta}(y) 
\qquad 
\left(x, y\in G\right). 
\]
Observe that the right hand side of this equation (as a functions of the variable $x$) is, due to the induction hypothesis, a generalized exponential polynomial of degree at most 
$|\alpha|-1$, showing that $f_{\alpha}$ is a generalized exponential monomial of degree at most $|\alpha|$. 

\item\label{rem_ix} The previous remark can be strengthened. Namely, generalized moment sequences of rank $r$ not only generalized exponential monomials, but they are 
generalized exponential monomials. 
This means that the variety of any such function is finite dimensional. 
Indeed, let $(f_{\alpha})_{\alpha \in \N^{r}}$ be a generalized moment sequence of rank $r$. 
Then for any multi-index $\alpha$ in  $\N^{r}$ we have 
\[
 f_{\alpha}(x+y)= \sum_{\beta \leq \alpha}\binom{\alpha}{\beta}f_{\beta}(x)f_{\alpha-\beta}(y) 
 \qquad 
 \left(x, y\in G\right), 
\]
showing that 
\[
 \tau(f)\subseteq \mathrm{span}\left\{f_{\beta}\, \vert \, \beta \leq \alpha\right\}. 
\]
The linear space on the right hand side is obviously finite dimensional, implying the same for $\tau(f)$. 

We conclude that the polynomial $p_{\alpha}$, appearing in remark \eqref{rem_viii}, is a complex polynomial of some complex-valued additive functions defined on $G$. 

We can summarize that, for any multi-index $\alpha$ in $\N^{r}$ there exists a complex polynomial $P$, and for each $\beta\leq \alpha$ there is an 
additive function $a_{\beta}\colon G\to \C$ such that 
\[
 f_{\alpha}(x)= P(a(x))m(x) 
 \qquad 
 \left(x\in G\right). 
\]
Here $a_{\beta}$ is the $\beta$\textsuperscript{th} coordinate function of $a$. 
\item\label{rem_x} If $k$ and $1\leq n_{1}< n_{2} < \cdots < n_{k} \leq r$ are positive integers, then let $\pi_{n_{1}, \ldots, n_{k}}\alpha$ denote that element in 
$\N^{r}$ which differs from $\alpha$ in only those coordinates that do not belong to the set $\left\{n_{1}, \ldots, n_{k}\right\}$, 
and those coordinates are zero.  

With this notation, let $k$ and $r$ be positive integers with $k\leq r$ and  $1\leq n_{1}< n_{2} < \cdots < n_{k} \leq r$ be also positive integers. 
If $(f_{\alpha})_{\alpha \in \N^{r}}$ is a generalized moment sequence of rank $r$ on the group $G$, then the sequence of functions 
$(f_{\pi_{n_{1}, \ldots, n_{k}}\alpha})_{\alpha \in \N^{r}}$ form a generalized moment sequence of rank $k$. 
\end{enumerate}
\end{rem}

As we will see, while describing the polynomial $P$ appearing in remark \eqref{rem_ix}, a well-known sequence of polynomials and an appropriate addition 
formula will play a distinguished role. 
Let us consider the sequence of complex polynomials $\left(B_{n}\right)_{n\in \N}$ defined through the following recurrence: for each $t,t_1,t_2,\dots,t_{n+1}$ in $\C$ we let 
\[
 \begin{array}{rcll}
  B_{0}(t)&=&1, &\\
  B_{n+1}(t_{1}, \ldots, t_{n+1})&=&\displaystyle\sum_{i=0}^{n}\binom{n}{i}B_{n-i}(t_{1}, \ldots, t_{n-i})t_{i+1} &\left(n=0,1,\dots\right).
 \end{array}
\]
Alternatively, we can also use the double series expansion of the generating function 
\[
 \exp\left(\sum_{j=1}^{\infty}x_{j}\frac{t^{j}}{j!}\right)= \sum_{n=0}^{\infty}B_{n}(x_{1}, \ldots, x_{n})\frac{t^{n}}{n!}. 
\]
We call $B_{n}$  the \emph{$n$\textsuperscript{th} complete (exponential) Bell polynomial}. 

For these functions the following addition formula holds:
\[
 B_{n}(t_{1}+u_{1}, \ldots, t_{n}+u_{n})=
 B_{n}(t_{1}, \ldots, t_{n})+\sum_{k=1}^{n-1}\binom{n}{k} B_{n-k}(t_{1}, \ldots, t_{n-k})B_{k}(u_{1}, \ldots, u_{k}) 
 +B_{n}(u_{1}, \ldots, u_{n}) 
\]
for any positive integer $n$, and for each complex numbers $t_{1}, \ldots, t_{n}$ and $u_{1}, \ldots, u_{n}$. 

The first few complete (exponential) Bell polynomials are
\[
\begin{array}{rl}
    B_{0}={}&1\\[8pt]
    B_{1}(x_{1})={}&x_{1}\\[8pt]
    B_{2}(x_{1},x_{2})={}&x_{1}^{2}+x_{2}\\[8pt]
    B_{3}(x_{1},x_{2},x_{3})={}&x_{1}^{3}+3x_{1}x_{2}+x_{3}\\[8pt]
    B_{4}(x_{1},x_{2},x_{3},x_{4})={}&x_{1}^{4}+6x_{1}^{2}x_{2}+4x_{1}x_{3}+3x_{2}^{2}+x_{4}\\[8pt]
    B_{5}(x_{1},x_{2},x_{3},x_{4},x_{5})={}&x_{1}^{5}+10x_{2}x_{1}^{3}+15x_{2}^{2}x_{1}+10x_{3}x_{1}^{2}+10x_{3}x_{2}+5x_{4}x_{1}+x_{5}\\[8pt]
    B_{6}(x_{1},x_{2},x_{3},x_{4},x_{5},x_{6})={}&x_{1}^{6}+15x_{2}x_{1}^{4}+20x_{3}x_{1}^{3}+45x_{2}^{2}x_{1}^{2}+15x_{2}^{3}+60x_{3}x_{2}x_{1}\\&{}+15x_{4}x_{1}^{2}+10x_{3}^{2}+15x_{4}x_{2}+6x_{5}x_{1}+x_{6}\\[8pt]
    B_{7}(x_{1},x_{2},x_{3},x_{4},x_{5},x_{6},x_{7})={}&x_{1}^{7}+21x_{1}^{5}x_{2}+35x_{1}^{4}x_{3}+105x_{1}^{3}x_{2}^{2}+35x_{1}^{3}x_{4}\\&{}+210x_{1}^{2}x_{2}x_{3}+105x_{1}x_{2}^{3}+21x_{1}^{2}x_{5}+105x_{1}x_{2}x_{4}\\&{}+70x_{1}x_{3}^{2}+105x_{2}^{2}x_{3}+7x_{1}x_{6}+21x_{2}x_{5}+35x_{3}x_{4}+x_{7}.\
    \end{array}
\]

It turns out, however, that these functions are useful in the rank one case, only. To cover the general case as well, a multivariate extension of the Bell polynomials is necessary. 
Here we follow the notation and the terminology of Port \cite{Por94}. 

The \emph{multivariate Bell polynomials} are the functions $ B_{\alpha}$ defined by
\[
 B_{\alpha}(x)= \alpha! \sum \prod_{\mu}\frac{(x_{\mu}^{c_{\mu}})}{c_{\mu}! (\mu!)^{c_{\mu}}}, 
\]
where the sum is taken over $\alpha= \sum_{0< \beta< \alpha} c_{\mu}\mu$. 

Similarly as above, they can also be introduced using the double series expansion of the generating function 
\[
 \sum_{0\leq |\alpha|}B_{\alpha}(x)\frac{t^{\alpha}}{\alpha!}= \exp \left(\sum_{1\leq |\mu|} x_{\mu}\frac{t^{\mu}}{\mu!}\right). 
\]

Let now $\alpha= (\alpha_{k})_{k\in \N}$ be a composition and 
$x=(x_{k})_{k\in \N}$ and $y=(y_{k})_{k\in \N}$ be sequences of variables. Then 
\[
 B_{\alpha}(x+y)= \sum_{\beta\leq \alpha}\binom{\alpha}{\beta}B_{\beta}(x)B_{\alpha-\beta}(y). 
\]

For instance, if $r=2$, then the first few multivariate Bell polynomials are the following: 
\[
 \begin{array}{rcl}
  B_{0, 0}(x)&=&1\\[8pt]
  B_{0, 1}(x_{0, 1})&=& x_{0, 1}\\[8pt]
  B_{1, 0}(x_{1, 0})&=& x_{1, 0}\\[8pt]
  B_{1, 1}(x_{0, 1}, x_{1, 0}, x_{1, 1})&=& x_{0, 1} x_{1, 0}+x_{1, 1}\\[8pt]
  B_{2, 0}(x_{0, 1}, x_{1, 0}, x_{1, 1}, x_{2, 0})&=& x_{1, 0}^{2}+x_{2, 0}\\[8pt]
  B_{0, 2}(x_{0, 1}, x_{1, 0}, x_{1, 1}, x_{2, 0})&=& x_{0, 1}^{2}+x_{0, 2}\\[8pt]
  B_{2, 1}(x_{0, 1}, x_{1, 0}, x_{1, 1}, x_{2, 0}, x_{2, 1})&=& x_{0, 1}x_{1, 0}^{2}+2x_{1, 0}x_{1, 1}+x_{0, 1}x_{2, 0}+x_{2, 1}\\[8pt]
  B_{2, 2}(x_{0, 1}, x_{1, 0}, x_{1, 1}, x_{2, 0}, x_{1, 2}, x_{2, 1}, x_{2, 2})&=& x_{0, 1}^{2}x_{1, 0}^{2}+x_{0, 2}x_{1, 0}^{2}+4x_{0, 1}x_{1, 0}x_{1, 1}+2x_{1, 1}^{2}+2x_{1, 0}x_{1, 2}\\
  &&+x_{0, 1}^{2}x_{2, 0}+x_{0, 2}x_{2, 0}+2x_{0, 1}x_{2, 1}+x_{2, 2}
 \end{array}
\]

It is important to point out that if $r=1$ then the multivariate Bell polynomials reduce to the complete (exponential) Bell polynomials. 

\begin{prop}
 Let $G$ be a commutative group, $r$ a positive integer,
$m\colon G\to \C$ an exponential and let
 $a= \left(a_{\alpha}\right)_{\alpha\in \N^{r}}$ be a sequence of complex-valued additive functions defined on $G$.  We define the functions $f_{\alpha}\colon G\to \C$  by 
 \[
  f_{\alpha}(x)= B_{\alpha}(a(x))m(x) 
  \qquad 
  \left(x\in G\right). 
 \]
Then $(f_{\alpha})_{\alpha\in \N^{r}}$ forms a generalized moment sequence of rank $r$ associated with the exponential $m$. 
\end{prop}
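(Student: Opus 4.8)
The plan is to verify the defining identity \eqref{Eq3} by a direct substitution that exploits three structural facts already available: that $m$ is an exponential, that each coordinate function $a_{\alpha}$ is additive, and the addition formula for the multivariate Bell polynomials stated just above. The first step is to record the consequence of additivity at the level of the sequence-valued map: since every $a_{\alpha}$ satisfies $a_{\alpha}(x+y)=a_{\alpha}(x)+a_{\alpha}(y)$, the full sequence obeys $a(x+y)=a(x)+a(y)$, where the sum on the right is taken coordinatewise. This is the input that lets a sum of group elements be fed into each $B_{\alpha}$ as a sum of argument sequences.

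Next I would compute the left-hand side of \eqref{Eq3}. By the definition of $f_{\alpha}$, the exponential property of $m$, and the additivity just noted,
\[
 f_{\alpha}(x+y)= B_{\alpha}\left(a(x+y)\right)m(x+y)= B_{\alpha}\left(a(x)+a(y)\right)m(x)m(y).
\]
Applying the addition formula for the multivariate Bell polynomials with the variable sequences $x,y$ specialized to $a(x),a(y)$ gives
\[
 B_{\alpha}\left(a(x)+a(y)\right)= \sum_{\beta\leq \alpha}\binom{\alpha}{\beta}B_{\beta}(a(x))B_{\alpha-\beta}(a(y)).
\]
Substituting this expansion, distributing the scalar factor $m(x)m(y)$ across the sum, and regrouping $B_{\beta}(a(x))m(x)$ with $B_{\alpha-\beta}(a(y))m(y)$ yields
\[
 f_{\alpha}(x+y)= \sum_{\beta\leq \alpha}\binom{\alpha}{\beta}f_{\beta}(x)f_{\alpha-\beta}(y),
\]
which is exactly the defining relation \eqref{Eq3} for each $x,y$ in $G$. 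Finally I would check the normalization: since $B_{0}=1$, we have $f_{0}(x)=B_{0}(a(x))m(x)=m(x)$, so the generating function of the sequence is $m$, and the sequence is indeed associated with the exponential $m$.

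There is essentially no serious obstacle here; once the Bell addition formula is in hand the argument is pure bookkeeping, and it is the rank-$r$ analogue of the elementary rank-one and rank-two computations carried out in the remarks. The only point that deserves a word of care is the legitimacy of specializing the variable sequences to $a(x)$ and $a(y)$: this is immediate because the addition formula is a polynomial identity valid for arbitrary complex entries, and $a(x),a(y)$ are merely particular complex sequences. It is worth emphasizing that the whole proof uses only the additivity of the coordinate functions and the exponential property of $m$, and in no way the finiteness of $r$ or of the variety, so it applies verbatim for every positive integer $r$.
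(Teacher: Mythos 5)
Your proof is correct and follows essentially the same route as the paper's: expand $f_{\alpha}(x+y)$ using the exponential property of $m$ and the coordinatewise additivity of $a$, apply the addition formula for the multivariate Bell polynomials, and regroup the factors to recover the defining relation, checking the base case $B_{0}=1$ separately. The only cosmetic difference is that you make explicit the (legitimate) specialization of the polynomial identity to the sequences $a(x),a(y)$, which the paper leaves implicit.
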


\begin{proof}
 If $|\alpha|=0$, then the above formula is 
 \[
  f_{0, \ldots, 0}(x)=m(x) 
  \qquad 
  \left(x\in G\right), 
 \]
that is, $f_{0, \ldots, 0}$ is an exponential. 

Let now $\alpha$ be in $\N^{r}$ with $|\alpha|>0$, then 
\begin{multline*}
f_{\alpha}(x+y)
= 
B_{\alpha}(a(x+y))m(x+y)
\\
=
B_{\alpha}(a(x)+a(y))m(x)m(y) 
=
\sum_{\beta}\binom{\alpha}{\beta}B_{\beta}(a(x))B_{\alpha-\beta}(a(y))m(x)m(y)
\\=
\sum_{\beta}\binom{\alpha}{\beta}B_{\beta}(a(x))m(x) \cdot B_{\alpha-\beta}(a(y))m(y)
=
\sum_{\beta}\binom{\alpha}{\beta}f_{\beta}(x)f_{\alpha-\beta}(y)
\end{multline*}
for each $x, y$ in $G$. 
\end{proof}

The following result is about the converse of the previous statement. 

\begin{thm}
 Let $G$ be a commutative group, $r$ a positive integer, and for each $\alpha$ in $\N^{r}$, 
 let $f_{\alpha}\colon G\to \C$ be a function. If the sequence of functions 
 $(f_{\alpha})_{\alpha\in \N^{r}}$ forms a generalized moment sequence of rank $r$, then there exists an 
 exponential $m\colon G\to \C$ and a sequence of complex-valued additive functions $a= (a_{\alpha})_{\alpha\in \N^{r}}$ 
 such that  for every multi-index $\alpha$ in $\alpha$ in $\N^{r}$ and $x$ in $G$ we have 
 \[
  f_{\alpha}(x)=B_{\alpha}(a(x))m(x). 
 \]
\end{thm}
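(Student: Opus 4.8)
The plan is to first strip off the exponential factor and then induct on $|\alpha|$, using the addition formula for the multivariate Bell polynomials recorded above. Since the generating function is assumed not identically zero, remark \eqref{rem_ii} shows $f_{0,\ldots,0}=m$ is an exponential, and by remark \eqref{rem_vii} the sequence $g_\alpha:=f_\alpha/m$ is again a generalized moment sequence of rank $r$, now associated with the constant exponential $1$; thus $g_{0,\ldots,0}\equiv1$ and
\[
 g_\alpha(x+y)=\sum_{\beta\leq\alpha}\binom{\alpha}{\beta}g_\beta(x)g_{\alpha-\beta}(y)\qquad(x,y\in G).
\]
It suffices to construct additive functions $a_\alpha$ with $g_\alpha=B_\alpha(a)$, since multiplying back by $m$ then gives $f_\alpha=B_\alpha(a)\,m$.

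For the induction on $n=|\alpha|$ the base cases are immediate: $g_0\equiv1=B_0$, while for $|\alpha|=1$ the equation collapses (as $g_0\equiv1$) to $g_\alpha(x+y)=g_\alpha(x)+g_\alpha(y)$, so $g_\alpha$ is additive and, since $B_\alpha(x)=x_\alpha$ here, we set $a_\alpha:=g_\alpha$. The structural fact driving the step is that the index-$\alpha$ variable occurs linearly with coefficient $1$ in $B_\alpha$: from the definition one reads off $B_\alpha(x)=x_\alpha+Q_\alpha(\hat x)$, where $Q_\alpha$ depends only on the variables $x_\mu$ with $1\leq|\mu|<|\alpha|$ (any nontrivial composition of $\alpha$ involves only parts $\mu$ with $|\mu|<|\alpha|$). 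Assuming additive $a_\mu$ with $g_\mu=B_\mu(a)$ are built for all $|\mu|<n$, I would \emph{define}, for each $\alpha$ with $|\alpha|=n$,
\[
 a_\alpha(x):=g_\alpha(x)-Q_\alpha(a(x))\qquad(x\in G),
\]
whose right-hand side is already determined by lower-order data.

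The crux -- and the step I expect to be the genuine obstacle -- is to check that this $a_\alpha$ is additive; granting that, $g_\alpha=a_\alpha+Q_\alpha(a)=B_\alpha(a)$ closes the induction. I would isolate $\beta\in\{0,\alpha\}$ in the moment equation and feed the induction hypothesis into the rest to get
\[
 g_\alpha(x+y)-g_\alpha(x)-g_\alpha(y)=\sum_{0<\beta<\alpha}\binom{\alpha}{\beta}B_\beta(a(x))B_{\alpha-\beta}(a(y)).
\]
Separately, substituting $B_\alpha=x_\alpha+Q_\alpha$ into the Bell addition formula and cancelling the linear terms yields the polynomial identity $Q_\alpha(\hat x+\hat y)-Q_\alpha(\hat x)-Q_\alpha(\hat y)=\sum_{0<\beta<\alpha}\binom{\alpha}{\beta}B_\beta(\hat x)B_{\alpha-\beta}(\hat y)$; evaluating it at $\hat x=a(x)$, $\hat y=a(y)$ is legitimate because the $a_\mu$ with $|\mu|<n$ are already additive, so that $a_\mu(x+y)=a_\mu(x)+a_\mu(y)$ under the substitution. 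Comparing the two displays gives $g_\alpha(x+y)-g_\alpha(x)-g_\alpha(y)=Q_\alpha(a(x+y))-Q_\alpha(a(x))-Q_\alpha(a(y))$, which is precisely $a_\alpha(x+y)=a_\alpha(x)+a_\alpha(y)$.

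The same mechanism can be seen at a glance through generating functions: setting $F(x,t)=\sum_\alpha g_\alpha(x)\,t^\alpha/\alpha!$ as a formal power series in $t_1,\ldots,t_r$, the moment equation is exactly $F(x+y,t)=F(x,t)F(y,t)$, and since $g_0\equiv1$ one may form $\log F(x,t)=\sum_{|\mu|\geq1}a_\mu(x)\,t^\mu/\mu!$; multiplicativity forces every coefficient $a_\mu$ to be additive, and exponentiating recovers $g_\alpha=B_\alpha(a)$ from the defining generating function of the Bell polynomials. I would present the inductive version as the main proof, as it remains within the elementary framework of the paper, but this viewpoint clarifies why the Bell polynomials are forced and why additivity is the only property that must be verified.
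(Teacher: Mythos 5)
Your proof is correct and is essentially the paper's own argument: both proceed by induction over multi-indices and hinge on comparing the moment equation (with the induction hypothesis substituted into the terms $0<\beta<\alpha$) against the addition formula for $B_\alpha$, so that the top-index correction term is forced to be additive. The only differences are cosmetic: you divide by $m$ at the outset (turning the paper's $m$-sine functions into additive functions) and you define $a_\alpha:=g_\alpha-Q_\alpha(a)$ directly via the decomposition $B_\alpha(x)=x_\alpha+Q_\alpha(\hat x)$, whereas the paper inserts an arbitrary additive $\chi$ into the last variable and then corrects it by an $m$-sine term $\eta$ --- your version amounts to taking $\chi=0$, and it makes the paper's follow-up remark on the uniqueness of $\xi$ immediate.
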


\begin{proof}
Let $r$ be a fixed positive integer, $G$ be a commutative group, let furthermore for each $\alpha\in \N^{r}$, 
 $f_{\alpha}\colon G\to \C$ be a function. 
 Assume that the sequence of functions  $(f_{\alpha})_{\alpha\in \N^{r}}$ forms a generalized moment sequence of rank $r$. 
 We prove the statement by induction on the height of the multi-index $\alpha$. 
 
 If $|\alpha|=0$, then we necessarily have $\alpha= (0, 0,\ldots, 0)$ and 
 \[
  f_{0, 0,\ldots, 0}(x+y)= f_{0, 0,\ldots, 0}(x)f_{0, 0,\ldots, 0}(y) 
  \qquad 
  \left(x, y\in G\right), 
 \]
which gives immediately that there exists an exponential $m\colon G\to \C$ such that 
$ f_{0, 0,\ldots, 0}= m$.
Furthermore, if $|\alpha|=1$, then there exists $i\in \left\{1, 2,\ldots, r\right\}$ such that 
$\alpha_{i}=1$ and $\alpha_{j}=0$ for each $j$ in $\{1, 2,\ldots, r\}$, $j\neq i$. 
In view of remark \eqref{rem_iii} this implies that $f_{\alpha}$ is an $m$-sine function, that is
\[
 f_{\alpha}(x)= B_{\alpha}(a(x))m(x) 
\]
holds for $x$ in $G$.

Now we assume that there exists a multi-index $\alpha$ in $\N^{r}$ such that the statement holds true for everymulti-index $\beta$ with
$\beta < \alpha$. This means that for any multi-index $\beta<\alpha$ there exists an additive function $a_{\beta}\colon G\to \C$ 
such that the representation in the statement of the theorem holds. We have to show how the additive function $a_{\alpha}\colon X\to \C$ should be constructed. 
Since $f_{\alpha}$ is a generalized moment function of order $\alpha$ and of rank $r$, we have 
\begin{multline*}
 f_{\alpha}(x+y)-f_{\alpha}(x)m(y)-m(x)f_{\alpha}(y)= 
 \sum_{0<\beta < \alpha}\binom{\alpha}{\beta} f_{\beta}(x)f_{\alpha-\beta}(y)
 \\
 =
 \sum_{0<\beta < \alpha}\binom{\alpha}{\beta} B_{\beta}(a(x))m(x)B_{\alpha-\beta}(a(y))m(y) 
 \quad 
 \left(x, y\in G\right). 
\end{multline*}
Let now $\chi \colon G\to \mathbb{C}$ be an arbitrary additive function. 
Then, due to the addition formula of the $|\alpha|$-variable fuction $B_{\alpha}$, we have that 
\begin{multline*}
 B_{\alpha}(a(x+y), \chi(x+y))m(x+y)-B_{\alpha}(a(x), \chi(x))m(x)\cdot m(y)-B_{\alpha}(a(y), \chi(y))m(y)\cdot m(x)
 \\
 =
 \sum_{0<\beta < \alpha}\binom{\alpha}{\beta} B_{\beta}(a(x))m(x)B_{\alpha-\beta}(a(y))m(y)
\end{multline*}
holds for each $x, y$ in $G$. 
Observe, that this means that the function $S_{\alpha}\colon G\to \mathbb{C}$ defined by 
\[
 S_{\alpha}(x)= f_{\alpha}(x)-B_{\alpha}(a(x), \chi(x)) 
 \qquad 
 \left(x\in G\right)
\]
is an $m$-sine function. Thus, there exists an additive function $\eta \colon G\to \mathbb{C}$ such that 
$S(x)= \eta(x)m(x)$ for all $x$ in $G$, hence 
\[
 f_{\alpha}(x)= B_{\alpha}(a(x), \chi(x))m(x)+\eta(x)m(x)
 = 
 B_{\alpha}(a(x), \chi(x)+\eta(x))m(x) 
 \qquad 
 \left(x\in G\right), 
\]
since $B_{\alpha}$ is additive in its last variable. 

We conclude that there exists an additive function $\xi \colon G\to \mathbb{C}$ such that 
\[
 f_{\alpha}(x)=B_{\alpha}(a(x), \xi(x))\cdot m(x)
\]
for each $x$ in $G$ and the theorem is proved.
\end{proof}

Observe that the additive function $\xi$ appearing in the above formula is uniquely determined, that is it does not depend on the choice of 
the function $\chi$ appearing at the first part of the proof. Indeed, if 
\[
 f_{\alpha}(x)=B_{\alpha}(a(x), \xi_{1}(x))\cdot m(x) 
 \quad 
 \text{and}
 \quad 
 f_{\alpha}(x)=B_{\alpha}(a(x), \xi_{2}(x))\cdot m(x)
\]
holds for each $x$ in $G$ with the additive functions $\xi_{1}, \xi_{2}\colon G\to \mathbb{C}$, 
then 
\[
 B_{\alpha}(a(x), \xi_{1}(x)) = B_{\alpha}(a(x), \xi_{2}(x))
\]
follows for ech $x$ in $G$. Recall that the function $B_{\alpha}$ is additive in its last variable and 
\[
 B_{\alpha}(a(x), \xi_{i}(x))= P_{\alpha}(a(x))+\xi_{i}(x) 
 \qquad 
 \left(x\in G, i=1, 2\right)
\]
holds with a certain polynomial $P_{\alpha}$ which does not depend neither on $\xi_{1}$ nor on $\xi_{2}$ which implies 
\[
 \xi_{1}(x)=\xi_{2}(x)
\]
for all $x$ in $G$. 

As a consequence of this result, the characterization of generalized moment functions (of rank one) follows. Namely, functions of the form $(B_{n}\circ a)\cdot m$ are generalized moment functions. More precisely, we have the following. 

\begin{cor}
 Let $G$ be a commutative group, $n$ an arbitrary positive integer, 
 $m\colon G\to \C$ an exponential, and let
 $a_{1}, a_2,\ldots, a_{n}\colon G\to \C$ be additive functions. 
We define the sequence of functions $(f_{n})_{n\in \N}$ for each $x$ in $G$ by
 \[
  f_{n}(x)= B_{n}(a_{1}(x), \ldots, a_{n}(x))m(x). 
 \]
Then $(f_{n})_{n\in \N }$ forms a generalized moment sequence associated with the exponential $m$. 
\end{cor}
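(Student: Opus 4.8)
The plan is to recognize this corollary as the rank-one instance of the Proposition proved just above. As emphasized in the preceding discussion, when $r=1$ the multivariate Bell polynomials $B_{\alpha}$ reduce to the complete (exponential) Bell polynomials $B_{n}$, and multi-indices in $\N^{1}$ are ordinary nonnegative integers; hence a generalized moment sequence of rank one is precisely a generalized moment sequence in the sense of remark \eqref{rem_i}. Applying that Proposition with $r=1$ therefore settles the claim at once. For completeness I would also supply the short direct verification, which mirrors the computation in the Proposition's proof.

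First I would record the base index. Since $B_{0}=1$ and $m$ is an exponential, $f_{0}=m$, so in particular $f_{0}(0)=m(0)=1$. This identifies $m$ as the generating exponential and furnishes the normalization demanded of a generalized moment sequence.

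Next, for $n\geq 1$ and arbitrary $x,y\in G$, I would expand
\[
 f_{n}(x+y)=B_{n}\left(a_{1}(x+y),\ldots,a_{n}(x+y)\right)m(x+y).
\]
Each $a_{i}$ is additive, so $a_{i}(x+y)=a_{i}(x)+a_{i}(y)$ and the argument of $B_{n}$ is a coordinatewise sum. Invoking the addition formula for $B_{n}$ with $t_{i}=a_{i}(x)$ and $u_{i}=a_{i}(y)$, together with $m(x+y)=m(x)m(y)$, lets me distribute the factor $m(x)m(y)$ across every summand. Recalling $f_{k}(x)=B_{k}(a_{1}(x),\ldots,a_{k}(x))m(x)$, each term recombines so that the right-hand side becomes (after the harmless reindexing $k\mapsto n-k$ in the middle sum)
\[
 f_{n}(x)f_{0}(y)+\sum_{k=1}^{n-1}\binom{n}{k}f_{n-k}(x)f_{k}(y)+f_{0}(x)f_{n}(y)=\sum_{k=0}^{n}\binom{n}{k}f_{k}(x)f_{n-k}(y),
\]
which is exactly \eqref{eq:MomentFunctionsOnGroup}, the defining relation of a generalized moment sequence associated with $m$.

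I do not anticipate a genuine obstacle here; the only point demanding care is purely bookkeeping. In the addition formula the factor $B_{n-k}(t_{1},\ldots,t_{n-k})$ depends on just the first $n-k$ entries, and this truncated argument list must coincide with the one occurring in the definition of $f_{n-k}$, so that each product collapses to the correct $f_{n-k}(x)f_{k}(y)$ without spurious dependence on higher variables. This consistency of truncations is precisely what the generating-function identity $\exp\left(\sum_{j=1}^{\infty}x_{j}\tfrac{t^{j}}{j!}\right)=\sum_{n=0}^{\infty}B_{n}(x_{1},\ldots,x_{n})\tfrac{t^{n}}{n!}$ encodes, so verifying it amounts to matching indices.
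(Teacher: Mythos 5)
Your proposal is correct and matches the paper's approach: the paper presents this corollary without a separate proof, precisely as the $r=1$ specialization of the preceding Proposition (using that multivariate Bell polynomials reduce to the complete Bell polynomials when $r=1$), which is your main argument. Your supplementary direct verification via the addition formula for $B_n$ is just the rank-one instance of the Proposition's own proof, so it adds rigor but no genuinely different route.
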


On the other hand, we can deduce the following result:

\begin{cor}
 Let $G$ be a commutative group, and for each natural number $n$ let 
 $f_{n}\colon G\to \C$ be a function. If the sequence 
 $(f_{n})_{n\in \N }$ forms a generalized moment sequence, then there exists an 
 exponential $m\colon G\to \C$ and there are additive functions 
 $a_{1}, a_2,\ldots, a_{n}\colon G\to \C$ such that 
 \[
  f_{n}(x)=B_{n}\left(a_{1}(x), \ldots, a_{n}(x)\right)m(x) 
 \]
 holds for every $x$ in $G$ and $n=1,2,\dots$.
\end{cor}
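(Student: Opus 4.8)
The plan is to read off this statement as the rank-one specialization of the characterization theorem proved above. Indeed, as observed in the first of the remarks following the definition of a generalized moment sequence of rank $r$, when $r=1$ a multi-index $\alpha\in\N^{1}$ is simply a nonnegative integer and the defining relation \eqref{Eq3} becomes
\[
 f_{n}(x+y)=\sum_{k=0}^{n}\binom{n}{k}f_{k}(x)f_{n-k}(y)
 \qquad
 \left(x,y\in G\right),
\]
which is precisely the defining relation of a generalized moment sequence. Hence the hypothesis that $(f_{n})_{n\in\N}$ is a generalized moment sequence is literally the assertion that it is a generalized moment sequence of rank $1$.

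First I would apply the rank-$r$ characterization theorem proved above with $r=1$. This produces an exponential $m\colon G\to\C$ together with a sequence of complex-valued additive functions $(a_{\alpha})_{\alpha\in\N^{1}}$, which in the present indexing is just a sequence $a_{1},a_{2},\ldots$ of additive functions, such that
\[
 f_{n}(x)=B_{n}(a(x))\,m(x)
 \qquad
 \left(x\in G,\ n\in\N\right),
\]
where $B_{n}$ denotes the multivariate Bell polynomial attached to the multi-index $n\in\N^{1}$.

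It remains only to identify the polynomial on the right-hand side. Here I would invoke the fact, recorded above, that for $r=1$ the multivariate Bell polynomials reduce to the complete (exponential) Bell polynomials; thus $B_{n}$ is the classical complete Bell polynomial in the $n$ variables $x_{1},\ldots,x_{n}$, and evaluating it at the additive data gives $B_{n}(a(x))=B_{n}(a_{1}(x),\ldots,a_{n}(x))$. Substituting this into the displayed formula yields exactly
\[
 f_{n}(x)=B_{n}\left(a_{1}(x),\ldots,a_{n}(x)\right)m(x),
\]
as claimed. Since all of the analytic content is already contained in the rank-$r$ theorem, there is no genuine obstacle here; the only point requiring care is the bookkeeping that matches the multi-index notation $B_{n}(a(x))$ of the general theorem with the classical $n$-variable Bell polynomial, and this is settled by the reduction just quoted.
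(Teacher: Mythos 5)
Your proposal is correct and coincides with the paper's own (implicit) derivation: the paper states this corollary as an immediate consequence of the rank-$r$ theorem, obtained precisely by specializing to $r=1$, identifying rank-one generalized moment sequences with ordinary generalized moment sequences, and using the stated fact that for $r=1$ the multivariate Bell polynomials reduce to the complete exponential Bell polynomials. Nothing is missing; the bookkeeping step you flag is exactly the only content of the reduction.
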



\begin{thebibliography}{10}

\bibitem{Acz77}
J\'{a}nos Acz\'{e}l.
\newblock Functions of binomial type mapping groupoids into rings.
\newblock {\em Math. Z.}, 154(2):115--124, 1977.

\bibitem{FecSze19}
\.{Z}ywilla Fechner and L\'{a}szl\'{o} Sz\'{e}kelyhidi.
\newblock Moment functions on affine groups.
\newblock {\em Results Math.}, 74(1):Paper No. 5, 13, 2019.

\bibitem{FecSze19a}
\.{Z}ywilla Fechner and L\'{a}szl\'{o} Sz\'{e}kelyhidi.
\newblock Spherical and moment functions on the affine group of
  {$\mathrm{SU}(n)$}.
\newblock {\em Acta Math. Hungar.}, 157(1):10--26, 2019.

\bibitem{OroSze04}
\'{A}gota Orosz and L\'{a}szl\'{o} Sz\'{e}kelyhidi.
\newblock Moment functions on polynomial hypergroups in several variables.
\newblock {\em Publ. Math. Debrecen}, 65(3-4):429--438, 2004.

\bibitem{OroSze05}
\'{A}gota Orosz and L\'{a}szl\'{o} Sz\'{e}kelyhidi.
\newblock Moment functions on polynomial hypergroups.
\newblock {\em Arch. Math. (Basel)}, 85(2):141--150, 2005.

\bibitem{OroSze08}
\'{A}gota Orosz and L\'{a}szl\'{o} Sz\'{e}kelyhidi.
\newblock Moment functions on {S}turm-{L}iouville hypergroups.
\newblock {\em Ann. Univ. Sci. Budapest. Sect. Comput.}, 29:141--156, 2008.

\bibitem{Por94}
Dan Port.
\newblock {\em Polynomial maps with applications to combinatorics and
  probability theory}.
\newblock ProQuest LLC, Ann Arbor, MI, 1994.
\newblock Thesis (Ph.D.)--Massachusetts Institute of Technology.

\bibitem{Sze13}
L\'{a}szl\'{o} Sz\'{e}kelyhidi.
\newblock {\em Functional equations on hypergroups}.
\newblock World Scientific Publishing Co. Pte. Ltd., Hackensack, NJ, 2013.

\bibitem{Sze14}
L\'{a}szl\'{o} Sz\'{e}kelyhidi.
\newblock {\em Harmonic and spectral analysis}.
\newblock World Scientific Publishing Co. Pte. Ltd., Hackensack, NJ, 2014.

\bibitem{Sze18}
L\'{a}szl\'{o} Sz\'{e}kelyhidi.
\newblock Spherical monomials on affine groups.
\newblock {\em Ann. Univ. Sci. Budapest. Sect. Comput.}, 48:209--223, 2018.

\bibitem{SzeVaj12a}
L\'{a}szl\'{o} Sz\'{e}kelyhidi and L\'{a}szl\'{o} Vajday.
\newblock Functional equations on the {$\mathrm{SU}(2)$}-hypergroup.
\newblock {\em Math. Pannon.}, 23(2):187--193, 2012.

\bibitem{SzeVaj12}
L\'{a}szl\'{o} Sz\'{e}kelyhidi and L\'{a}szl\'{o} Vajday.
\newblock A moment problem on some types of hypergroups.
\newblock {\em Ann. Funct. Anal.}, 3(2):58--65, 2012.

\end{thebibliography}

\end{document}